\documentclass[a4, 12pt]{amsart}
\usepackage{fullpage}
\usepackage[title]{appendix}
\usepackage{graphicx}
\usepackage{subcaption}
\usepackage{amsthm}
\usepackage{mathrsfs} 
\usepackage{amsfonts}
\usepackage{amssymb}
\usepackage{diagbox}
\usepackage{mathtools}
\usepackage{longtable} 
\usepackage{MnSymbol}
\usepackage{indentfirst}
\usepackage{listings,xcolor}
\usepackage{MnSymbol}
\usepackage{geometry}
\usepackage[colorlinks=true,linkcolor=blue]{hyperref}

\newtheorem{theorem}{Theorem}[section]
\newtheorem{proposition}[theorem]{Proposition}
\newtheorem{lemma}[theorem]{Lemma}
\newtheorem{corollary}[theorem]{Corollary}
\newtheorem{conjecture}[theorem]{Conjecture}

\theoremstyle{definition}

\newtheorem{remark}[theorem]{Remark}

\numberwithin{equation}{section}

\newcommand{\bZ}{\mathbb{Z}}

\newcommand{\qbm}[2]{\genfrac{[}{]}{0pt}{}{#1}{#2}}

\begin{document}

\title[Positivity and tails of Jacobi theta series]
{Positivity and tails of Jacobi theta series}

\author{Nian Hong \textsc{Zhou}}

\address{
School of Mathematics and Statistics, The Center for Applied Mathematics of Guangxi,
Guangxi Normal University, Guilin, 541006, Guangxi, PR China
}
\email{nianhongzhou@outlook.com; nianhongzhou@gxnu.edu.cn}

\thanks{
This work was partially supported by the National Natural Science Foundation of China
(No. 12301423).
}

\keywords{Positivity; theta series; partial theta series}
\subjclass[2020]{Primary 11P82; Secondary 11B83, 05A16}

\begin{abstract}
Using elementary $q$-series manipulations, we establish a positivity property for the tails of the Jacobi theta series. Specifically, for integers $k\ge 1$ and $n\ge 0$, define
\[
\sum_{n\ge0}\sum_{m\in\bZ}J_{k,n}(m)z^m q^{n}
=
\frac{(-1)^k q^{-\binom{k+1}{2}}}{(z)_{\infty}(q/z)_\infty} 
\sum_{j\ge k}(-1)^jq^{\binom{j+1}{2}}z^{-j}(1-z^{2j+1}),
\]
where $(a)_\infty:=\prod_{n\ge0}(1-aq^n)$ denotes the $q$-shifted factorial. 
We prove that for all integers $k\ge 1$ and $n\ge 0$, the coefficients $J_{k,n}(m)$ are positive for all integers $-(k+n)\le m\le k+n$.
\end{abstract}

\maketitle

\section{Introduction}

Throughout this paper,
let the \textit{$q$-shifted factorial} (cf.\  \cite{MR2128719})
be defined by
$$
(a)_\infty:=(a;q)_\infty:=\prod_{j\ge 0}(1-aq^j),\quad\text{and}\quad
(a)_c:=(a;q)_c:=\frac{(a;q)_\infty}{(aq^c;q)_\infty},
$$
for any indeterminate $a$ and $c\in\bZ$. For nonnegative integers $n$ and $k$, the
\textit{$q$-binomial coefficient} is defined as
\begin{equation*}
\qbm{n}{k}_q:=
\frac{(q;q)_n}{(q;q)_k(q;q)_{n-k}}.
\end{equation*}

Euler's pentagonal number theorem
(cf.\ \cite[Equation~(8.10.10)]{MR2128719}) is stated as
\begin{equation}\label{PNT}
(q;q)_\infty=\sum_{j\ge 0}(-1)^jq^{j(3j+1)/2}(1-q^{2j+1}),
\end{equation}
which is one of the most famous $q$-series identities and plays an important role in the theory of partitions. In 2012, Andrews and Merca~\cite{MR2946378} gave an explicit expansion for
the averaged truncation of Euler's pentagonal number series appearing in \eqref{PNT}.
In particular, they \cite[Lemma~1.2]{MR2946378} proved that
\begin{align*}
\frac{1}{(q;q)_\infty}\sum_{0\le j< k}(-1)^jq^{j(3j+1)/2}(1-q^{2j+1})=1+(-1)^{k-1}\sum_{n\ge k} \frac{q^{\binom{k}{2}+(k+1)n}}{(q;q)_n}
  \qbm{n-1}{k-1}_q.
\end{align*}
Consequently, according to the parity of $k$, the nonconstant coefficients
on the right-hand side of the above are either all nonnegative or all
nonpositive.

\medskip

A natural generalization of Euler's pentagonal number theorem is Jacobi's
triple product identity
\begin{equation}\label{JTP}
(z)_\infty(z^{-1}q)_\infty(q)_\infty
=
\sum_{n\in\bZ}(-1)^nq^{\binom{n}{2}}z^n.
\end{equation}
Indeed, \eqref{JTP} reduces to \eqref{PNT} after replacing $q$ by $q^3$
and setting $z=q$ or $z=q^2$. The following result on averaged truncations of Jacobi's triple product identity was originally conjectured by Andrews and Merca~\cite[Question~(2)]{MR2946378} and independently by Guo and Zeng~\cite[Conjecture~6.1]{MR3007145}, and has since been established by multiple proofs.

\begin{theorem}\label{AMthm}
For positive integers $m,k,R,S$ with $1\le S<R/2$, the coefficient of $q^m$ in
\begin{equation*}
\frac{(-1)^{k-1}}{(q^S;q^R)_\infty(q^{R-S};q^R)_\infty(q^{R};q^{R})_{\infty}}
\sum_{0\le n<k}(-1)^nq^{\binom{n+1}{2}R-nS}(1-q^{(2n+1)S})
\end{equation*}
is non-negative.
\end{theorem}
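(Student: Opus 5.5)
The plan is to derive Theorem~\ref{AMthm} from the positivity of the tails $J_{k,n}(m)$ stated in the abstract, via the triple product \eqref{JTP}.

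\emph{Reduction to a tail.} Replacing $n$ by $-n-1$ on the negative half of \eqref{JTP} (and $z\mapsto z^{-1}$ after reindexing) gives
\[
(z)_\infty(q/z)_\infty(q)_\infty=\sum_{j\ge0}(-1)^jq^{\binom{j+1}{2}}z^{-j}(1-z^{2j+1}).
\]
Split this sum into $0\le j<k$ and $j\ge k$ and divide by the product. Then
\[
\frac{(-1)^{k-1}}{(z)_\infty(q/z)_\infty(q)_\infty}\sum_{0\le j<k}(-1)^jq^{\binom{j+1}{2}}z^{-j}(1-z^{2j+1})
=(-1)^{k-1}+\frac{q^{\binom{k+1}{2}}}{(q)_\infty}\sum_{n\ge0}\sum_{m\in\bZ}J_{k,n}(m)z^mq^n .
\]
Now substitute $q\mapsto q^R$ and $z\mapsto q^S$; the left side becomes the series in Theorem~\ref{AMthm}. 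Since $1/(q^R;q^R)_\infty$ has nonnegative coefficients, it suffices to show two things for $F(q):=\sum_{n,m}J_{k,n}(m)q^{\binom{k+1}{2}R+nR+mS}$:
\begin{enumerate}
\item every coefficient of $q^N$ with $N\ge1$ in $F$ is nonnegative;
\item when $k$ is even, the constant $-1$ is cancelled exactly by the $q^0$-coefficient of $F$.
\end{enumerate}
(This bookkeeping is the point where one checks that the substitution is legitimate as formal power series: for fixed $N$ only finitely many $(n,m)$ contribute.)

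\emph{Using the main positivity.} For $|m|\le k+n$ the hypothesis $S<R/2$ gives $\binom{k+1}{2}R+nR+mS>0$. Each such term is therefore a nonconstant monomial with coefficient $J_{k,n}(m)>0$, by the main result on the tails.

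\emph{The main obstacle.} The remaining terms are those with $|m|>k+n$. There $J_{k,n}(m)$ need not be positive, and these are exactly the terms that can produce the $q^0$ coefficient. My plan is to get an explicit description of $J_{k,n}(m)$ outside the window $[-(k+n),k+n]$. Expanding $1/\bigl((z)_\infty(q/z)_\infty\bigr)$ as the Appell--Lerch-type series
\[
\frac{1}{(q)_\infty^2}\sum_{r}\frac{(-1)^rq^{\binom{r+1}{2}}}{1-zq^r}
\]
and multiplying by the tail, I expect that for $m>k+n$ (and, symmetrically, via $z\mapsto q/z$, for $m<-(k+n)$) the coefficient $J_{k,n}(m)$ coincides with a coefficient of the full theta quotient $\sum_j(-1)^jq^{\binom{j+1}{2}}z^{-j}(1-z^{2j+1})/\bigl((z)_\infty(q/z)_\infty\bigr)=(q)_\infty$ shifted by the truncated part. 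That truncated part is a finite Laurent polynomial in $z$ divided by the product.

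Concretely, I would show that the out-of-window contributions to $F$, after the substitution, sum to $(-1)^k$ plus a series that can be grouped by total exponent $N$ and paired with in-window terms of the same exponent. The pairing would use $m\mapsto -m-1$, the symmetry coming from $z\mapsto q/z$, together with $S<R/2$ to ensure each out-of-window monomial is dominated by an in-window one. This pairing/domination step is where I expect the real difficulty. If it resists, the fallback is to prove the stronger statement directly: the coefficient of $q^N$ in $F$ equals a sum of $J_{k,n}(m)$ restricted to $|m|\le k+n$, plus nonnegative terms.
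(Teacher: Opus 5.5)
Your reduction through \eqref{JTP} to the tail series is correct. It is also the way Theorem~\ref{AMthm} follows from this paper's results: the paper does not reprove Theorem~\ref{AMthm}, but cites earlier proofs and notes that it is refined by Conjecture~\ref{conjt1}, which follows from Corollary~\ref{mcor}. The gap is that you never settle the terms with $|m|>k+n$, and your plan for them rests on a false premise. The missing observation is that there are no such terms: $J_{k,n}(m)=0$ for $|m|>k+n$. Indeed,
\[
\sum_{n\ge0}\sum_{m\in\bZ}J_{k,n}(m)z^mq^n=\frac{1}{(qz)_\infty(q/z)_\infty}\sum_{j\ge k}(-1)^{j-k}q^{(j-k)(j+k+1)/2}\sum_{|i|\le j}z^{i}.
\]
The coefficient of $q^N$ in $1/((qz)_\infty(q/z)_\infty)$ contains only $z^{i}$ with $|i|\le N$. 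Since also $(j-k)(j+k+1)/2\ge j-k$, the coefficient of $q^n$ is a Laurent polynomial supported on $-(n+k)\le m\le n+k$. This is why Conjecture~\ref{conjt20} is written with that range, and Corollary~\ref{mcor} in fact gives $J_{k,n}(m)\ge0$ for every $m$. The same support bound is what justifies your finiteness claim for the substitution $z\mapsto q^S$, because then $\binom{k+1}{2}R+nR+mS\ge k\bigl(\tfrac{(k+1)R}{2}-S\bigr)+n(R-S)$. Without it you have no control over which $(n,m)$ contribute to a given power of $q$.

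Consequently your item~(2) is not just unnecessary but false. By the estimate above, $F$ has zero constant term, so the constant term of the series in Theorem~\ref{AMthm} is $(-1)^{k-1}$, which equals $-1$ for even $k$. That is exactly why the theorem is stated only for $m\ge1$. For the same reason, your proposed description of the out-of-window contributions as $(-1)^k$ plus a series to be paired with in-window terms cannot hold, since those contributions are identically zero. The Appell--Lerch expansion and the pairing/domination step should be discarded. Once you add the support bound and drop~(2), your first two paragraphs already form a complete proof. $F$ has nonnegative coefficients and no constant term, multiplying by $1/(q^R;q^R)_\infty$ preserves both properties, and adding $(-1)^{k-1}$ changes only the coefficient of $q^0$.
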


The first proofs of Theorem~\ref{AMthm} were given independently by 
Mao~\cite{MR3280682} and Yee~\cite{MR3280681} in 2015, using algebraic 
and combinatorial methods respectively. A combinatorial proof was later 
provided by He, Ji, and Zang~\cite{MR3398853}. More recently, 
Wang and Yee~\cite[Theorem~2.3]{MR3937794} and Schlosser and the 
author~\cite[Theorem~1.1]{SZ20231} established different $q$-series 
identities that imply this result. 

\medskip

In~\cite{MR4309312}, Merca proposes the following conjecture on truncations of Jacobi's triple product identity, which refines the above Theorem ~\ref{AMthm}.

\begin{conjecture}\label{conjt1}
Let $1\le S<R/2$ and $k\ge1$. Then the theta series
\[
\frac{(-1)^k}
{(q^S;q^R)_\infty(q^{R-S};q^R)_\infty}
\sum_{j\ge k}(-1)^jq^{j(j+1)R/2-jS}
(1-q^{(2j+1)S})
\]
has non-negative coefficients.
\end{conjecture}
In \cite{MR4768277}, the author proposed a positivity
conjecture involving a two-variable theta series, which further refines
Conjecture~\ref{conjt1}. To state its positivity part, let $k,d\ge1$ and
$n\ge0$ be integers, and define Laurent polynomials
$\mathscr{L}_{k,n}^{d}(z)$ by
\[
\sum_{n\ge0}\mathscr{L}_{k,n}^{d}(z)q^{n}
=
\frac{(-1)^kq^{-k(k+1)/2}}{(z)_{d+1}(z^{-1}q)_d}
\sum_{j\ge k}(-1)^jq^{j(j+1)/2}z^{-j}(1-z^{2j+1}).
\]
One part of the author's positivity conjecture is stated as follows.
\begin{conjecture}\label{conjt20}
For all integers $k\ge1$, $d\ge2$, and $n\ge0$, the Laurent polynomial
\[
\mathscr{L}_{k,n}^{d}(z)
=
\sum_{-n-k\le m\le n+k}J_{k,n}^{d}(m)z^m
\]
has positive coefficients.

\end{conjecture}
\begin{remark} 
With $q$ and $z$ replaced by $q^R$ and $q^S$, respectively, we observe that 
$(z)_{d+1}(q/z)_d$ becomes $(q^S;q^R)_{d+1}(q^{R-S};q^R)_d$,
which appears in $(q^S;q^R)_\infty (q^{R-S};q^R)_\infty$.
Therefore, if Conjecture \ref{conjt20} holds for some $d\in \bZ_{\ge 2}\cup\{\infty\}$, then it implies Conjecture \ref{conjt1}.
\end{remark}

Very recently, Ding and Sun~\cite{arXiv:2606.27507} announced a combinatorial proof of Conjecture~\ref{conjt1}. Motivated by their work, we use elementary $q$-series manipulations to prove Conjecture~\ref{conjt20} in the limiting case $d=\infty$. Our proof is inspired by the ideas of Ding and Sun~\cite{arXiv:2606.27507} as well as the author's recent work~\cite{arXiv:2606.13274} on monotonicity of rank functions for concave compositions.

\medskip

For any integer $a\ge1$ and indeterminate $x$, define
\begin{equation}\label{eqdfg}
f_a(x)
=
\sum_{i\ge0}
\frac{q^{i^2+i(a-1)}x^i}{(q)_i(x)_i}\quad \text{and}
\quad
g_a(x)
=
\sum_{i\ge0}
\frac{q^{i^2+ia}x^i}{(q)_i(x)_{i+1}}.
\end{equation}
Our main result is the following identity, from which the case $d=\infty$ of Conjecture \ref{conjt20} follows immediately, and consequently Merca's Conjecture \ref{conjt1} as well.
\begin{theorem}\label{main}
For every integer $k\ge1$, we have
\begin{align}
&
\frac{(-z)^k q^{-\binom{k+1}{2}}}
{(z)_\infty(z^{-1}q)_\infty}
\sum_{j\ge k}(-z)^{-j}q^{\binom{j+1}{2}}(1-z^{2j+1})
\nonumber\\
&\quad =
\frac{1+z+\cdots+z^{2(k-1)}}
{(qz)_\infty(z^{-1}q)_k}
f_1(z^{-1}q^{k+1})+
\frac{z^{2k-1}+z^{2k}}{(z^{-1}q)_k(qz)_k}
\sum_{\substack{r\ge k\\ r\equiv k\pmod 2}}
\frac{z^{2(r-k)} f_2(z^{-1}q^{r+1})}
{(zq^{k+1})_{r-k}(zq^{r+2})_\infty}
\nonumber\\
&\qquad
+
\frac{1+z}{(z^{-1}q)_k}
\sum_{\substack{r\ge k\\ r\equiv k\pmod 2}}
\frac{
z^{2r-2}q^{r+2}g_2(z^{-1}q^{r+1})
}
{(zq)_{r-1}(zq^{r+1})_\infty}.
\label{main-id}
\end{align}
\end{theorem}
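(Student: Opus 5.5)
We will prove \eqref{main-id} in three steps, each a standard $q$-series manipulation: first rewrite the tail of the theta series as a positive series by summation by parts, then identify the resulting pieces with $f_1,f_2,g_2$ via Euler-type expansions, and finally reorganize by parity.

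\textbf{Step 1: a recursion in $k$.} Denote the left side of \eqref{main-id} by $L_k(z)$. Write $T_k(z)=\sum_{j\ge k}(-z)^{-j}q^{\binom{j+1}{2}}(1-z^{2j+1})$. Directly from the definition,
\[
(-z)^kq^{-\binom{k+1}{2}}T_k=(1-z^{2k+1})-(-z)^{k}q^{-\binom{k+1}{2}}(-z)^{-k-1}q^{\binom{k+2}{2}}(\cdots),
\]
which gives
\[
L_k=\frac{1-z^{2k+1}}{(z)_\infty(q/z)_\infty}-\frac{q^{k+1}}{z}L_{k+1}.
\]
Since the right side of \eqref{main-id} is built from sums over $r\ge k$ with $r\equiv k\pmod 2$, we iterate this relation twice to get a relation between $L_k$ and $L_{k+2}$. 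We then verify that the right side $R_k$ satisfies the same two-step recursion. Its $r$-sums telescope: the $r=k$ term splits off, and the tail with $r\ge k+2$ is, up to the explicit prefactors, $R_{k+2}$. Together with the vanishing $L_k,R_k\to 0$ as $k\to\infty$ (in the $q$-adic sense, after clearing the $q^{-\binom{k+1}{2}}$), the identity reduces to a single \emph{local identity} relating $f_1(z^{-1}q^{k+1})$, $f_1(z^{-1}q^{k+3})$, $f_2(z^{-1}q^{k+1})$ and $g_2(z^{-1}q^{k+1})$ with rational coefficients in $z$ and $q^k$.

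\textbf{Step 2: functional equations for $f_a,g_a$.} Next we derive the contiguous relations these series satisfy. Using $1/(x)_{i}=1/(x)_{i+1}-xq^i/(x)_{i+1}$ and shifting the summation index $i\mapsto i+1$ gives relations such as
\[
f_a(x)=g_{a-1}(x)-x\,g_a(x),\qquad g_a(x)=\frac{1}{1-x}+\frac{xq^{a+1}}{1-x}\,g_{a+1}(xq)\ \text{(type)}.
\]
Also, $f_1(x)=\sum q^{i^2}x^i/((q)_i(x)_i)=1/(x)_\infty$ by the Durfee-square identity (Cauchy). This explains why $f_1$ pairs with $1/(qz)_\infty$ to reproduce the theta prefactor. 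Substituting $x=z^{-1}q^{k+1}$ and using these relations, the local identity becomes a finite linear relation which we check by comparing the coefficients of $x^i$ termwise.

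\textbf{Main obstacle.} The hard step is the telescoping bookkeeping in Step 1. We must show that the products $(zq^{k+1})_{r-k}(zq^{r+2})_\infty$ and $(zq)_{r-1}(zq^{r+1})_\infty$ shift correctly between $k$ and $k+2$ against the prefactors $1/((q/z)_k(qz)_k)$. We also need the leftover boundary terms to combine exactly into the geometric factor $1+z+\cdots+z^{2(k-1)}$, using $(1-z^{2k+1})/(1-z)$ with $1/(z)_\infty=1/((1-z)(qz)_\infty)$.

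If the direct recursion becomes too messy, our fallback is a different route. Expand $1/(q/z)_\infty=\sum_n z^{-n}q^n/(q)_n$ and split $1/(q/z)_\infty=(q^{k+1}/z)_\infty^{-1}\cdot 1/(q/z)_k$, matching the factor $(z^{-1}q)_k$ on the right. Then apply Heine/Cauchy to $\sum_{j\ge k}$ after the shift $j=k+r$. We would then identify the $r$-parity split as coming from pairing consecutive terms $r,r+1$ of the alternating tail, as in Andrews–Merca's proof of their Lemma 1.2.
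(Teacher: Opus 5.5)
Your plan fails at the Step~1 telescoping claim, and it cannot be repaired as stated, because \eqref{main-id} as printed is not an identity. Since $(qz)_k(zq^{k+1})_{r-k}=(zq)_r$, the $r$-th summands of both $r$-sums are literally the same in $R_k$ and in $R_{k+2}$. Only the outer factor changes, from $1/(z^{-1}q)_k$ to $1/(z^{-1}q)_{k+2}$. So the $r\ge k+2$ part of $R_k$ is $(1-q^{k+1}/z)(1-q^{k+2}/z)$ times the $r$-sums of $R_{k+2}$. Your (correct) recursion $L_k=A_k-\frac{q^{k+1}}{z}A_{k+1}+\frac{q^{2k+3}}{z^2}L_{k+2}$, with $A_k=(1-z^{2k+1})/((z)_\infty(q/z)_\infty)$, instead needs the factor $q^{2k+3}/z^2$. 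The discrepancy is an infinite sum, not a local identity.

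Concretely, look at the coefficient of $q^0$. On the left it is $1+z+\cdots+z^{2k}$. On the right, every $f_2$-summand contributes $(1+z)z^{2r-1}$ at $q^0$, giving $1+z+\cdots+z^{2k}+(z^{2k-1}+z^{2k})z^4/(1-z^4)$.

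What the paper's Lemma~\ref{lem2} and Proposition~\ref{prop-hk} actually give is the printed expression with the $r$-th summand of each $r$-sum multiplied by
\[
\frac{z^{k-r}q^{\binom{r+1}{2}-\binom{k+1}{2}}}{(z^{-1}q^{k+1})_{r-k}}.
\]
The reason is that $h_r-z^2h_{r+1}$ carries the factor $z^{-r}q^{\binom{r+1}{2}}(z^{-1}q^{r+1})_\infty$, while the paper's final substitution uses its $r=k$ value for every $r$. The positivity in Corollary~\ref{mcor} survives, since this factor has nonnegative coefficients. For the corrected right side your telescoping works exactly: the weights for $k$ and $k+2$ differ by $z^{-2}q^{\binom{k+3}{2}-\binom{k+1}{2}}=z^{-2}q^{2k+3}$. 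The remainder then vanishes $q$-adically because all $L_j,R_j$ are power series in $q$; note that $L_k$ itself does not tend to $0$.

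The second gap is in Step~2: $f_1(x)\neq 1/(x)_\infty$. Cauchy's identity $\sum_{i\ge0}q^{i^2}x^i/((q)_i(xq)_i)=1/(xq)_\infty$ says that the $a=0$ members $g_0(x)=f_0(x)$ equal $1/(x)_\infty$. By contrast, $(x)_\infty f_1(x)=\sum_{n\ge0}(-x)^nq^{\binom{n}{2}}$ is a partial theta series; this is Lemma~\ref{lem1}, which the paper obtains as a limit of Heine's transformation. That is the idea your plan is missing. It is what converts the tails $h_r(z)=\sum_{j\ge r}(-1)^{j-r}q^{\binom{j+1}{2}}z^{-j}$, and hence the $A_k$, into $f_1,f_2,g_2$. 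With $f_1=1/(x)_\infty$ the local identity is simply false.

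You can recover the needed fact with your own tools. Your relation $f_1=g_0-xg_1$, together with $g_a(x)=f_a(xq)/(1-x)$ and $g_0=1/(x)_\infty$, gives $\Theta(x)=1-x\Theta(xq)$ for $\Theta(x):=(x)_\infty f_1(x)$. Put $x_j=z^{-1}q^{j+1}$ and use Lemma~\ref{lemm} in the form $(x_k)_\infty\bigl[(1-zq^{k+1})f_2(x_k)+z^{-1}q^{k+2}(1-zq^k)g_2(x_k)\bigr]=\Theta(x_k)-zq^{k+1}\Theta(x_{k+1})$. Then the corrected local identity, after multiplying by $(qz)_\infty(z^{-1}q)_\infty$, reduces to $\Theta(x_k)=1-x_k\Theta(x_{k+1})$ and $\Theta(x_{k+1})=1-x_{k+1}\Theta(x_{k+2})$.

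Repaired this way, your argument is a genuinely different, Heine-free verification of a given closed form. The paper instead derives the identity directly. It writes $(1-z^{2j+1})/(1-z)=\sum_{0\le r\le 2k-2}z^r+(1+z)\sum_{k\le r\le j}z^{2r-1}$, interchanges sums to produce the tails $h_r$, and pairs consecutive $r$. Done carefully, that derivation is also what determines the correct $r$-dependent weights.
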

Consequently, we have the following corollary.
\begin{corollary}\label{mcor}
For every integer $k\ge1$, we have
\begin{align*}
\sum_{n\ge0} \sum_{m\in\bZ}J_{k,n}(m)z^m q^{n}:=&\frac{(-1)^k q^{-\binom{k+1}{2}}}
{(z)_\infty(z^{-1}q)_\infty}
\sum_{j\ge k}(-z)^{-j}q^{\binom{j+1}{2}}(1-z^{2j+1})\\
=&\frac{z^{-k}(1+z+\cdots+z^{2k})}
{(1-qz)(1-z^{-1}q)}+z^{-k}{\sf N}_k(z,q),
\end{align*}
where ${\sf N}_k(z,q)$ has nonnegative coefficients in its Laurent series expansion in $z$ and $q$. 
In particular, for all integers $k\ge 1$ and $n\ge 0$, the coefficients $J_{k,n}(m)$ are positive for all integers $m$ with $-(k+n)\le m\le k+n$; that is, Conjecture~\ref{conjt20} holds in the limiting case $d=\infty$.
\end{corollary}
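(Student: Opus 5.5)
Corollary~\ref{mcor} will be read off from Theorem~\ref{main}. Multiplying \eqref{main-id} by $z^{-k}$, the left-hand side of \eqref{main-id} becomes exactly the generating function of the $J_{k,n}(m)$, since $(-z)^k(-z)^{-j}=(-1)^kz^{k-j}\cdot z^{-k}\cdot z^{k}$ up to the factor $z^{-k}$. So the plan is to show that each of the three terms on the right of \eqref{main-id} has a nonnegative Laurent expansion in $z$ and $q$, and to locate inside the first term the explicit piece $(1+\cdots+z^{2k})/((1-qz)(1-z^{-1}q))$. From this piece positivity on the whole range $|m|\le k+n$ then follows.

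\textbf{Nonnegativity of the three terms.} Every factor $1/(zq^a)_c$ or $1/(z^{-1}q^a)_c$ with $a\ge1$ expands as a series with nonnegative coefficients in the monomials $z^{\pm1}q^{\ge1}$, hence in $z^{\pm 1}$ and $q$. The only delicate point is the factor $1/(x)_i$ inside $f_a(x)$ and $g_a(x)$ with $x=z^{-1}q^{r+1}$. Here $(x)_i=\prod_{0\le t<i}(1-z^{-1}q^{r+1+t})$, whose reciprocal is again a nonnegative series because $r+1\ge 2$. The numerators $q^{i^2+i(a-1)}x^i$ are monomials. For the Laurent expansion to make sense, each fixed power of $q$ must involve only finitely many terms. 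This holds because every monomial $z^{e}q^{n}$ produced has $|e|\le n+\text{const}$: each $z^{\pm1}$ comes with at least one power of $q$, apart from the finitely many explicit prefactors $z^{2r-2}$, $z^{2(r-k)}$, which are accompanied by $q$-powers growing with $r$ through $f_2(z^{-1}q^{r+1})$, $q^{r+2}$, and so on. So all three summands contribute to ${\sf N}_k$ nonnegatively.

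\textbf{Extracting the main term.} In the first term, the $i=0$ part of $f_1$ equals $1$. We write
\[
\frac{1}{(qz)_\infty(z^{-1}q)_k}=\frac{1}{(1-qz)(1-z^{-1}q)}\cdot\frac{1}{(zq^2)_\infty(z^{-1}q^2)_{k-1}}
\]
and expand the second factor as $1+(\text{nonnegative series})$. This yields
\[
\frac{1+z+\cdots+z^{2k-2}}{(1-qz)(1-z^{-1}q)}+(\text{nonnegative}).
\]
The missing powers $z^{2k-1}+z^{2k}$ come from the second term. Its $r=k$ summand has prefactor $(z^{2k-1}+z^{2k})/((z^{-1}q)_k(qz)_k)$ and, taking $i=0$ in $f_2$, a remaining factor $1/(zq^{k+2})_\infty$. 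Extracting $1/((1-qz)(1-z^{-1}q))$ in the same way gives $(z^{2k-1}+z^{2k})/((1-qz)(1-z^{-1}q))$ plus a nonnegative remainder. The leftover pieces all go into ${\sf N}_k$.

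\textbf{Positivity on the range.} It remains to check that
\[
\frac{1+\cdots+z^{2k}}{(1-qz)(1-z^{-1}q)}=\sum_{a,b\ge0}(1+\cdots+z^{2k})\,z^{a-b}q^{a+b}
\]
has coefficient of $z^{m'}q^n$ at least $1$ for $0\le m'\le 2k+2n$, where $m'=m+k$. For given $n$ we take $a+b=n$ with $a-b$ any value in $\{-n,-n+2,\dots,n\}$, and add a shift $s\in[0,2k]$. The sums $(a-b)+s$ cover every integer in $[-n,2k+n]$, because consecutive values of $a-b$ differ by $2\le 2k+1$. After multiplying by $z^{-k}$ this gives the range $[-n-k,n+k]$, which is the claimed one. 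The main obstacle is really Theorem~\ref{main} itself. For the corollary, the only care needed is the bookkeeping of the $r=k$ term and the well-definedness of the Laurent expansion.
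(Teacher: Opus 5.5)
\textbf{Gap: the well-definedness step fails, and Theorem~\ref{main} as printed is false.}

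Your route is the paper's own. You multiply \eqref{main-id} by $z^{-k}$ and peel $1/((1-qz)(1-z^{-1}q))$ off the first term and off the $r=k$ summand of the second sum, taking $i=0$ in $f_1$ and $f_2$. Everything else goes into ${\sf N}_k$, and you then count how the exponents $a-b+s$ cover $[-n,2k+n]$. The extraction and the covering count are fine. (You once write the range as $0\le m'\le 2k+2n$, but then correctly use $[-n,2k+n]$.)

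Your well-definedness paragraph is false, and this is not cosmetic. In the second sum of \eqref{main-id}, the factor $z^{2(r-k)}$ is not accompanied by any $q$-power growing with $r$. Indeed $f_2(x)=1+O(x)$, and every $q$-shifted factorial there is $1+O(q)$. At $q=0$ that sum equals $(z^{2k-1}+z^{2k})\sum_{s\ge0}z^{4s}$. So the printed right-hand side has $q^0$-coefficient
\[
1+z+\cdots+z^{2k-2}+\frac{z^{2k-1}+z^{2k}}{1-z^4},
\]
while the left-hand side has $q^0$-coefficient $1+z+\cdots+z^{2k}$, since only $j=k$ contributes. For example, with $k=1$ the right side contains $z^5+z^6$ at $q^0$. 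Thus \eqref{main-id} as printed is false, and the corollary cannot be read off from it. The paper's ${\sf N}_k$ has the same defect, because it lists the same $r>k$ summands.

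\textbf{Where it goes wrong and how to fix it.} The slip is in the proof of Theorem~\ref{main}. In the $r$-th summand of Lemma~\ref{lem2}, Proposition~\ref{prop-hk} must be applied with $k$ replaced by $r$. That gives the prefactor $z^{-r}q^{\binom{r+1}{2}}(z^{-1}q^{r+1})_\infty$, not its $k$-version. Separately, the factor $z^{-k}$ in the first term of Lemma~\ref{lem2} as stated is a typo; its proof does not produce it. Done correctly, the full $r$-th summands of the second and third sums, including the outside factors, are
\[
\frac{(1+z)\,z^{r+k-1}q^{\binom{r+1}{2}-\binom{k+1}{2}}\,f_2(z^{-1}q^{r+1})}{(z^{-1}q)_r\,(zq)_r\,(zq^{r+2})_\infty}
\qquad\text{and}\qquad
\frac{(1+z)\,z^{r+k-2}q^{\binom{r+1}{2}-\binom{k+1}{2}+r+2}\,g_2(z^{-1}q^{r+1})}{(z^{-1}q)_r\,(zq)_{r-1}\,(zq^{r+1})_\infty}.
\]
These properties make the corrected identity usable:
\begin{enumerate}
\item The summands agree with the printed ones at $r=k$.
\item They are still manifestly nonnegative.
\item They carry the factor $q^{(r-k)(r+k+1)/2}$. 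Hence each power of $q$ receives only finitely many $r$, with bounded $z$-exponents.
\end{enumerate}
As a check, for $k=1$ the corrected identity matches the left-hand side through $q^2$. Replace \eqref{main-id} by this corrected identity and your argument goes through verbatim, since the main term uses only the $r=k$ pieces.
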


\section{Proof of the main theorem}\label{sec2}

We first record an elementary relation between the series $f_a$ and $g_a$ defined by \eqref{eqdfg}.
It is a special case of \cite[Proposition~2.1]{arXiv:2606.13274}. For the
reader's convenience, we include the proof.

\begin{lemma}\label{lemm}
For every $a\ge1$ and every $\beta$, we have
\[
f_a(x)-\beta g_a(x)
=
(1-\beta)f_{a+1}(x)+xq^a(1-q^{-a}\beta)g_{a+1}(x).
\]
\end{lemma}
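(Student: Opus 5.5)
The plan is to prove the identity by splitting it into two separate three-term relations, one for $f_a$ and one for $g_a$, each with no $\beta$ in it. Since the claimed identity is linear in $\beta$, it suffices to establish
\begin{equation*}
f_a(x)=f_{a+1}(x)+xq^a g_{a+1}(x)
\qquad\text{and}\qquad
g_a(x)=f_{a+1}(x)+x\,g_{a+1}(x).
\end{equation*}
Subtracting $\beta$ times the second from the first then gives
\begin{equation*}
f_a(x)-\beta g_a(x)=(1-\beta)f_{a+1}(x)+x(q^a-\beta)g_{a+1}(x),
\end{equation*}
and $x(q^a-\beta)=xq^a(1-q^{-a}\beta)$ is exactly the stated coefficient. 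Both series are treated as formal power series in $q$ (or with $|q|<1$), so termwise manipulation is legitimate.

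For the first relation, we take the difference $f_a-f_{a+1}$ termwise. The $i$-th term is
\begin{equation*}
\frac{q^{i^2+i(a-1)}x^i(1-q^i)}{(q)_i(x)_i}.
\end{equation*}
The $i=0$ term vanishes. For $i\ge1$ the factor $1-q^i$ cancels against $(q)_i$, leaving $(q)_{i-1}$ in the denominator. Shifting $i=j+1$ and using
\begin{equation*}
(j+1)^2+(j+1)(a-1)=j^2+j(a+1)+a,
\end{equation*}
the $j$-th term becomes $xq^a\,q^{j^2+j(a+1)}x^j/\bigl((q)_j(x)_{j+1}\bigr)$. This is $xq^a$ times the $j$-th term of $g_{a+1}(x)$, as required.

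For the second relation, we take $g_a-f_{a+1}$ termwise. The exponents $q^{i^2+ia}x^i/(q)_i$ agree in the two series, so the $i$-th term of the difference is
\begin{equation*}
\frac{q^{i^2+ia}x^i}{(q)_i}\left(\frac{1}{(x)_{i+1}}-\frac{1}{(x)_i}\right)
=\frac{q^{i^2+ia}x^i}{(q)_i}\cdot\frac{xq^i}{(x)_{i+1}},
\end{equation*}
since $1-(1-xq^i)=xq^i$. The right-hand side is $x$ times the $i$-th term of $g_{a+1}(x)$, because $i^2+ia+i=i^2+i(a+1)$.

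There is no real obstacle here; the only point needing care is the index bookkeeping in the shift for the first relation, namely checking that the exponent lands exactly on $i^2+i(a+1)$ with the leftover factor $xq^a$.
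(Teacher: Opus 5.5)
Your proof is correct and is essentially the paper's argument. The paper also derives $f_a(x)=f_{a+1}(x)+xq^a g_{a+1}(x)$ by splitting $1=(1-q^i)+q^i$ in the numerator, and $g_a(x)=f_{a+1}(x)+x\,g_{a+1}(x)$ by splitting $1=(1-xq^i)+xq^i$, then subtracts $\beta$ times the second relation from the first; your termwise differences are the same computation.
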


\begin{proof}
Using \eqref{eqdfg}, we obtain
\begin{align*}
f_a(x)
&=
\sum_{i\ge0}
\frac{q^{i^2+i(a-1)}x^i(1-q^i+q^i)}
{(q)_i(x)_i}
\\
&=
\sum_{i\ge1}
\frac{q^{i^2+i(a-1)}x^i}{(q)_{i-1}(x)_i}
+
\sum_{i\ge0}
\frac{q^{i^2+ia}x^i}{(q)_i(x)_i}
\\
&=
xq^a g_{a+1}(x)+f_{a+1}(x).
\end{align*}
Similarly,
\begin{align*}
g_a(x)
&=
\sum_{i\ge0}
\frac{q^{i^2+ia}x^i(1-xq^i+xq^i)}
{(q)_i(x)_{i+1}}
\\
&=
\sum_{i\ge0}
\frac{q^{i^2+ia}x^i}{(q)_i(x)_i}
+
\sum_{i\ge0}
\frac{q^{i^2+i(a+1)}x^{i+1}}{(q)_i(x)_{i+1}}
\\
&=
f_{a+1}(x)+xg_{a+1}(x).
\end{align*}
Subtracting $\beta g_a(x)$ from $f_a(x)$ gives
\[
f_a(x)-\beta g_a(x)
=
(1-\beta)f_{a+1}(x)+x(q^a-\beta)g_{a+1}(x),
\]
which is the desired identity.
\end{proof}

The next lemma connects a \emph{partial theta series} with a \emph{basic
hypergeometric series}. This connection is one of the ingredients that makes
the limiting case $d=\infty$ accessible by using Lemma \ref{lemm}.

\begin{lemma}\label{lem1}
For every $x$, we have
\[
\frac{1}{(x)_\infty}
\sum_{n\ge0}(-x)^nq^{\binom{n}{2}}
=
\sum_{n\ge0}
\frac{q^{n^2}x^n}{(q)_n(x)_n}.
\]
\end{lemma}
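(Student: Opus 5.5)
The plan is to clear the denominator $(x)_\infty$ and compare coefficients of $x^N$ on both sides, treating everything as a formal power series in $x$ with coefficients in $\mathbb{Q}(q)$ (or in $\mathbb{Z}[[q]]$ for $|q|<1$). Multiplying the right-hand side by $(x)_\infty$ and using $(x)_\infty/(x)_n=(xq^n)_\infty$, the claim becomes
\[
\sum_{n\ge0}\frac{q^{n^2}x^n}{(q)_n}(xq^n)_\infty=\sum_{N\ge0}(-x)^Nq^{\binom N2}.
\]

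First, expand $(xq^n)_\infty$ by Euler's identity $(y)_\infty=\sum_{m\ge0}(-1)^mq^{\binom m2}y^m/(q)_m$, taking $y=xq^n$. This turns the left-hand side into the double sum
\[
\sum_{n,m\ge0}\frac{(-1)^m q^{n^2+nm+\binom m2}x^{n+m}}{(q)_n(q)_m}.
\]

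Second, collect the terms with $n+m=N$ and put $m=N-n$. The exponent is
\[
n^2+n(N-n)+\binom{N-n}{2}=\binom N2+\binom{n+1}2,
\]
which is a direct expansion. Hence the coefficient of $x^N$ equals
\[
\frac{(-1)^Nq^{\binom N2}}{(q)_N}\sum_{n=0}^N\qbm{N}{n}_q(-1)^nq^{\binom n2}q^n .
\]

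Third, evaluate the inner sum with the finite $q$-binomial theorem,
\[
\sum_{n=0}^N\qbm Nn_q(-1)^nq^{\binom n2}y^n=(y)_N,
\]
at $y=q$. This gives $(q)_N$, which cancels the prefactor $1/(q)_N$ and leaves exactly $(-1)^Nq^{\binom N2}$, the desired coefficient. Dividing back by $(x)_\infty$ finishes the proof.

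There is no real obstacle here. The only step that needs care is the exponent bookkeeping in the second step, so that the inner sum has exactly the shape required by the $q$-binomial theorem. The interchange of summations is harmless, since for each power of $x$ only finitely many terms contribute.
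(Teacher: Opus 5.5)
Your proof is correct, and it takes a different route from the paper's.

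\textbf{The paper's route.} The paper gets the lemma in one line as a degenerate case of Heine's second transformation. It sets $z=xq/ab$ and $c=x$, so that the factor $(abz/c)_n=(q)_n$ cancels against $(q)_n$. It then lets $a,b\to\infty$, using $(a)_n(b)_n(xq/ab)^n\to q^{n^2}x^n$ and $(b)_n(x/b)^n\to(-1)^nq^{\binom n2}x^n$.

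\textbf{Your route.} You clear $(x)_\infty$, expand $(xq^n)_\infty$ by Euler's formula, and compare coefficients of $x^N$. This reduces the lemma to the finite identity
\[
\sum_{n=0}^N\qbm Nn_q(-1)^nq^{\binom{n+1}{2}}=(q;q)_N,
\]
which is the Cauchy binomial theorem at $y=q$. Your exponent computation $n^2+n(N-n)+\binom{N-n}2=\binom N2+\binom{n+1}2$ is right, and so is the sign $(-1)^{N-n}=(-1)^N(-1)^n$.

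\textbf{Comparison.} Your argument is self-contained and uses only two standard expansions. Its only regrouping is finite for each power of $x$, or absolutely convergent if you work analytically with $|q|<1$. By contrast, the paper's passage $a,b\to\infty$ inside infinite series tacitly needs a dominated-convergence (Tannery-type) justification, which the paper does not give. The paper's route is shorter once Heine's transformation is taken as known. It also shows the lemma as a limiting case of a classical transformation with free parameters. For example, keeping $b$ finite and letting only $a\to\infty$ gives a one-parameter extension of the same identity.
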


\begin{proof}
We use Heine's second transformation
\[
\sum_{n\ge0}
\frac{(a)_n(b)_n z^n}{(q)_n(c)_n}
=
\frac{(c/b)_\infty(bz)_\infty}{(c)_\infty(z)_\infty}
\sum_{n\ge0}
\frac{(abz/c)_n(b)_n}{(q)_n(bz)_n}
\left(\frac{c}{b}\right)^n;
\]
see \cite[Appendix, (III.2)]{MR2128719}. Put $z=xq/ab$ and $c=x$. Then
\[
\sum_{n\ge0}
\frac{(a)_n(b)_n(xq/ab)^n}{(q)_n(x)_n}
=
\frac{(x/b)_\infty(xq/a)_\infty}{(x)_\infty(xq/ab)_\infty}
\sum_{n\ge0}
\frac{(q)_n(b)_n}{(q)_n(xq/a)_n}
\left(\frac{x}{b}\right)^n.
\]
Letting $a,b\to\infty$ gives the assertion.
\end{proof}

The following decomposition is equivalent to a lemma of Ding and Sun
\cite[Lemma~3.2]{arXiv:2606.27507}.

\begin{lemma}\label{lem2}
For $r\ge1$, define
\[
h_r(z):=(-1)^r\sum_{j\ge r}(-z)^{-j}q^{j(j+1)/2}.
\]
Then, for every integer $k\ge1$,
\begin{align*}
&
\frac{(-1)^k}
{(z)_\infty(q/z)_\infty}
\sum_{j\ge k}(-z)^{-j}q^{\binom{j+1}{2}}(1-z^{2j+1})
\\
&\quad =
\frac{z^{-k}(1+z+\cdots+z^{2(k-1)})}
{(qz)_\infty(q/z)_\infty}
h_k(z)
\\
&\qquad
+
\frac{1+z}{(qz)_\infty(q/z)_\infty}
\sum_{\substack{r\ge k\\ r\equiv k\pmod 2}}
z^{2r-1}\bigl(h_r(z)-z^2h_{r+1}(z)\bigr).
\end{align*}
\end{lemma}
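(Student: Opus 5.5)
The plan is to clear the common denominator and then compare the two sides coefficient by coefficient, which reduces the lemma to a finite polynomial identity in $z$. Since $(z)_\infty=(1-z)(qz)_\infty$, multiplying both sides by $(qz)_\infty(q/z)_\infty$ removes every infinite product. The left-hand side becomes
\[
\frac{(-1)^k}{1-z}\sum_{j\ge k}(-z)^{-j}q^{\binom{j+1}{2}}(1-z^{2j+1})
=(-1)^k\sum_{j\ge k}(-z)^{-j}q^{\binom{j+1}{2}}\bigl(1+z+\cdots+z^{2j}\bigr).
\]
Both sides are now formal series in the distinct monomials $q^{\binom{j+1}{2}}$ with $j\ge k$. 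Each coefficient is a Laurent polynomial in $z$, so it suffices to match the coefficient of $(-1)^{j+k}z^{-j}q^{\binom{j+1}{2}}$ for every fixed $j\ge k$.

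Next I expand the right-hand side in the same basis, using the definition
\[
h_r(z)=(-1)^r\sum_{i\ge r}(-1)^iz^{-i}q^{\binom{i+1}{2}}.
\]
The term involving $h_k$ contributes $z^{-k}(1+z+\cdots+z^{2(k-1)})$ to every $j\ge k$. In the sum over $r$, the condition $r\equiv k\pmod 2$ gives $h_r$ the sign $(-1)^{j+k}$, and $-z^2h_{r+1}$ also contributes with sign $+(-1)^{j+k}$, because $(-1)^{r+1}$ cancels the minus sign. Hence the total coefficient at index $j$ is
\[
z^{-k}\bigl(1+\cdots+z^{2k-2}\bigr)
+(1+z)\sum_{\substack{k\le r\le j\\ r\equiv k\ (2)}}z^{2r-1}
+(1+z)\sum_{\substack{k\le r\le j-1\\ r\equiv k\ (2)}}z^{2r+1}.
\]
The $r$-sums combine into one geometric progression, because $(1+z)\bigl(z^{2r-1}+z^{2r+1}\bigr)$ runs through consecutive powers. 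It telescopes to a block of consecutive powers of $z$. The endpoint depends on whether $j-k$ is even or odd, so I will treat the two parities separately. As a check, the case $j=k$ should reduce to the trivial identity that the $h_k$-term plus $(1+z)z^{2k-1}$ equals the left coefficient.

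The main obstacle is the bookkeeping of normalisations, namely the powers $z^{\pm k}$ and the signs coming from $(-z)^{-j}$. As I have written them, the coefficient on the left is $1+\cdots+z^{2j}$, while the right carries an explicit $z^{-k}$. One of the factors must therefore absorb a shift: either the stated left side tacitly carries the factor $z^{-k}$, as in Corollary~\ref{mcor}, or $h_k$ is to be read with the accompanying power. I would first fix this normalisation by checking the $j=k$ coefficient, and adjust $z^{-k}$ versus $(-z)^{k}$ so that both sides agree there. Then I would run the telescoping for general $j$ in both parity cases, confirming that the degree range $[-k,\,2j-k]$, or its shifted version, is filled exactly once. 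Because everything is a finite identity in $z$ for each $j$, there are no convergence issues.
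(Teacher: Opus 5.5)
Your route is essentially the paper's. Both arguments rest on the finite identity $1+z+\cdots+z^{2j}=(1+\cdots+z^{2k-2})+(1+z)\sum_{r=k}^{j}z^{2r-1}$. The paper inserts this identity into the series, interchanges the $j$- and $r$-sums, and pairs $r$ with $r+1$; you instead read off the coefficient of $q^{\binom{j+1}{2}}$. Your coefficient formula is correct. No parity split is needed: since $z^{2r+1}=z^{2(r+1)-1}$, your two $r$-sums together cover $z^{2r'-1}$ for every $r'$ with $k\le r'\le j$. So the block is $z^{2k-1}+z^{2k}+\cdots+z^{2j}$ in all cases.

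The genuine problem is that you leave the normalization open, and it cannot be absorbed: the printed statement is false, and your own $j=k$ check shows it. In your normalized basis the left coefficient is $1+z+\cdots+z^{2k}$, while the right one is $z^{-k}(1+\cdots+z^{2k-2})+z^{2k-1}+z^{2k}$. For $k=1$ these are $1+z+z^2$ and $z^{-1}+z+z^2$.

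Your suggestion that the left side tacitly carries $z^{-k}$ does not repair this. The $(1+z)$-block contributes $z^{2k-1}+\cdots+z^{2j}$ unscaled, so a discrepancy $(z^{-k}-1)(z^{2k-1}+\cdots+z^{2j})$ remains for every $j$. Reinterpreting all $h_r$ with a power of $z$ fails for the same reason. The only consistent fix is to delete the factor $z^{-k}$ in front of the $h_k$-term.

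That corrected identity is exactly what the paper's proof derives: its last display has $(1+z+\cdots+z^{2(k-1)})h_k(z)$ with no $z^{-k}$. It is also the form used in proving Theorem~\ref{main}, where the $z^{k}$ from $(-z)^k$ cancels the $z^{-k}$ in Proposition~\ref{prop-hk}'s formula for $h_k$. With the correction, your check closes at once, since $(1+\cdots+z^{2k-2})+(z^{2k-1}+\cdots+z^{2j})=1+\cdots+z^{2j}$. You should state and prove the corrected lemma rather than hedge between reinterpretations.
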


\begin{proof}
We compute
\begin{align*}
&
\frac{(-1)^k}{1-z}
\sum_{j\ge k}(-z)^{-j}q^{\binom{j+1}{2}}(1-z^{2j+1})
\\
&=
\sum_{j\ge k}
(-1)^{j-k}q^{j(j+1)/2}z^{-j}
\sum_{0\le r\le2j}z^r
\\
&=
h_k(z)\sum_{0\le r\le2(k-1)}z^r
+
(1+z)
\sum_{j\ge k}
(-1)^{j-k}q^{j(j+1)/2}z^{-j}
\sum_{k\le r\le j}z^{2r-1}
\\
&=
h_k(z)\sum_{0\le r\le2(k-1)}z^r
+
(1+z)
\sum_{r\ge k}
(-1)^{r-k}z^{2r-1}
\sum_{j\ge r}
(-1)^{j-r}q^{j(j+1)/2}z^{-j}
\\
&=
h_k(z)\sum_{0\le r\le2(k-1)}z^r
+
(1+z)
\sum_{r\ge k}
(-1)^{r-k}z^{2r-1}h_r(z).
\end{align*}
Equivalently,
\begin{align*}
\frac{(-1)^k}{1-z}
\sum_{j\ge k}(-z)^{-j}q^{\binom{j+1}{2}}(1-z^{2j+1})
=&(1+z+\cdots+z^{2(k-1)})h_k(z)\\
&+
z^{2k-1}(1+z)
\sum_{s\ge0}z^{4s}
\bigl(h_{k+2s}(z)-z^2h_{k+2s+1}(z)\bigr).
\end{align*}
Dividing by $(qz)_\infty(q/z)_\infty$ proves the result.
\end{proof}

The next proposition expresses $h_k(z)$ and the difference
$h_k(z)-z^2h_{k+1}(z)$ in terms of the series $f_a$ and $g_a$.

\begin{proposition}\label{prop-hk}
For every integer $k\ge1$, we have
\[
h_k(z)
=
z^{-k}q^{\binom{k+1}{2}}
(z^{-1}q^{k+1})_\infty f_1(z^{-1}q^{k+1}),
\]
and
\begin{align*}
&
\frac{z^kq^{-\binom{k+1}{2}}}
{(z^{-1}q^{k+1})_\infty}
\bigl(h_k(z)-z^2h_{k+1}(z)\bigr)
\\
&\quad =
(1-zq^{k+1})f_2(z^{-1}q^{k+1})
+
z^{-1}q^{k+2}(1-zq^k)g_2(z^{-1}q^{k+1}).
\end{align*}
\end{proposition}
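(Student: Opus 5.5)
The plan is to reduce both identities to Lemma~\ref{lem1} and Lemma~\ref{lemm} through the single substitution $x=z^{-1}q^{k+1}$.

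\textbf{First identity.} Reindex the sum defining $h_k(z)$ by $j=k+n$ with $n\ge0$. The exponent splits as
\[
\binom{k+n+1}{2}=\binom{k+1}{2}+n(k+1)+\binom{n}{2},
\]
and the signs satisfy $(-1)^k(-z)^{-k-n}=z^{-k}(-1)^nz^{-n}$. Hence
\[
h_k(z)=z^{-k}q^{\binom{k+1}{2}}\sum_{n\ge0}(-x)^nq^{\binom n2},\qquad x=z^{-1}q^{k+1}.
\]
Lemma~\ref{lem1} turns the partial theta series into $(x)_\infty\sum_{n\ge0}q^{n^2}x^n/((q)_n(x)_n)$. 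This last sum is exactly $f_1(x)$ by \eqref{eqdfg}, which gives the first formula.

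\textbf{Second identity, step 1.} Apply the first formula with $k+1$ in place of $k$. The new argument is $z^{-1}q^{k+2}=xq$, and $\binom{k+2}{2}=\binom{k+1}{2}+k+1$, so
\[
z^2h_{k+1}(z)=z^{1-k}q^{\binom{k+1}{2}+k+1}(xq)_\infty f_1(xq).
\]
Multiply $h_k(z)-z^2h_{k+1}(z)$ by $z^kq^{-\binom{k+1}{2}}/(x)_\infty$ and use $(xq)_\infty/(x)_\infty=1/(1-x)$. The left side of the second identity becomes
\[
f_1(x)-zq^{k+1}\,\frac{f_1(xq)}{1-x}.
\]

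\textbf{Second identity, step 2.} This is the only point needing a small observation. I would check termwise that $f_1(xq)/(1-x)=g_1(x)$. Indeed, $(xq)_i(1-x)=(x)_{i+1}$ and $q^{i^2}(xq)^i=q^{i^2+i}x^i$, which match the definition of $g_1$ in \eqref{eqdfg}. The expression is therefore $f_1(x)-\beta g_1(x)$ with $\beta=zq^{k+1}$.

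\textbf{Second identity, step 3.} Lemma~\ref{lemm} with $a=1$ rewrites this as
\[
(1-\beta)f_2(x)+xq(1-q^{-1}\beta)g_2(x).
\]
Substituting $x=z^{-1}q^{k+1}$ and $\beta=zq^{k+1}$ gives $xq=z^{-1}q^{k+2}$ and $1-q^{-1}\beta=1-zq^k$. This is precisely the claimed right-hand side.

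The only steps needing care are the exponent bookkeeping in the reindexing and the identification of $g_1$. Beyond that, everything is a direct application of the two earlier lemmas, and I do not expect a genuine obstacle.
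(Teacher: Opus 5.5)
Your proposal is correct and follows the paper's proof essentially step for step. You reindex $h_k$ and apply Lemma~\ref{lem1} with $x=z^{-1}q^{k+1}$. You then shift $k\to k+1$ and recognize the resulting series as $g_1(x)$; the paper does this by rewriting $h_{k+1}$ directly with $(z^{-1}q^{k+1})_{n+1}$, which is the same as your observation $f_1(xq)/(1-x)=g_1(x)$. Finally you apply Lemma~\ref{lemm} with $a=1$ and $\beta=zq^{k+1}$, exactly as the paper does.
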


\begin{proof}
By definition,
\begin{align*}
h_k(z)
&=
\sum_{j\ge k}
(-1)^{j-k}q^{\binom{j+1}{2}}z^{-j}
\\
&=
z^{-k}q^{\binom{k+1}{2}}
\sum_{j\ge0}
(-1)^jq^{\binom{j}{2}}(z^{-1}q^{k+1})^j.
\end{align*}
Applying Lemma~\ref{lem1} with $x=z^{-1}q^{k+1}$ gives
\[
h_k(z)
=
z^{-k}q^{\binom{k+1}{2}}
(z^{-1}q^{k+1})_\infty
\sum_{n\ge0}
\frac{q^{n^2}(z^{-1}q^{k+1})^n}
{(q)_n(z^{-1}q^{k+1})_n},
\]
which is the first identity.

Replacing $k$ by $k+1$ in the preceding formula gives
\begin{align*}
h_{k+1}(z)
&=
z^{-k-1}q^{\binom{k+2}{2}}
(z^{-1}q^{k+2})_\infty
\sum_{n\ge0}
\frac{q^{n^2}(z^{-1}q^{k+2})^n}
{(q)_n(z^{-1}q^{k+2})_n}
\\
&=
z^{-k-1}q^{\binom{k+1}{2}+k+1}
(z^{-1}q^{k+1})_\infty
\sum_{n\ge0}
\frac{q^{n^2+n}(z^{-1}q^{k+1})^n}
{(q)_n(z^{-1}q^{k+1})_{n+1}}.
\end{align*}
Therefore, by using the definition \eqref{eqdfg} of $f_a$ and $g_a$,
\begin{align*}
\frac{h_k(z)-z^2h_{k+1}(z)}
{z^{-k}q^{\binom{k+1}{2}}(z^{-1}q^{k+1})_\infty}
 =
f_1(z^{-1}q^{k+1})-zq^{k+1}g_1(z^{-1}q^{k+1}).
\end{align*}
Applying Lemma~\ref{lemm} with $a=1$, $x=z^{-1}q^{k+1}$, and
$\beta=zq^{k+1}$ yields
\[
f_1(z^{-1}q^{k+1})-zq^{k+1}g_1(z^{-1}q^{k+1})
=
(1-zq^{k+1})f_2(z^{-1}q^{k+1})
+
z^{-1}q^{k+2}(1-zq^k)g_2(z^{-1}q^{k+1}),
\]
as required.
\end{proof}
We now prove Theorem \ref{main} and Corollary \ref{mcor}.
\begin{proof}[Proof of Theorem~\ref{main}]
Substituting the two identities in Proposition~\ref{prop-hk} into
Lemma~\ref{lem2}, and using
\[
(z^{-1}q)_\infty=(z^{-1}q)_k(z^{-1}q^{k+1})_\infty,
\]
gives
\begin{align*}
&
\frac{(-z)^k q^{-\binom{k+1}{2}}}
{(z)_\infty(z^{-1}q)_\infty}
\sum_{j\ge k}(-z)^{-j}q^{\binom{j+1}{2}}(1-z^{2j+1})
\\
&=
\frac{1+z+\cdots+z^{2(k-1)}}
{(qz)_\infty(z^{-1}q)_k}
f_1(z^{-1}q^{k+1})+
\frac{(1+z)z^{2k-1}}{(z^{-1}q)_k}
\sum_{\substack{r\ge k\\ r\equiv k\pmod 2}}
\frac{
z^{2(r-k)}f_2(z^{-1}q^{r+1})}
{(zq)_{r}(zq^{r+2})_\infty}
\\
&\quad
+
\frac{1+z}{(z^{-1}q)_k}
\sum_{\substack{r\ge k\\ r\equiv k\pmod 2}}
\frac{
z^{2r-2}q^{r+2}g_2(z^{-1}q^{r+1})
}
{(zq)_{r-1}(zq^{r+1})_\infty},
\end{align*}
which completes the proof.
\end{proof}
\begin{proof}[Proof of Corollary~\ref{mcor}]
Our main identity \eqref{main-id} can be rewritten as follows:
\begin{align*}
\frac{(-z)^k q^{-\binom{k+1}{2}}}
{(z)_\infty(z^{-1}q)_\infty}
\sum_{j\ge k}(-z)^{-j}q^{\binom{j+1}{2}}(1-z^{2j+1})
=\frac{1+z+\cdots+z^{2k}}
{(1-qz)(1-z^{-1}q)}+{\sf N}_k(z,q),
\end{align*}
where
\begin{align*}
{\sf N}_k(z,q)=&\frac{1+z+\cdots+z^{2(k-1)}}
{(1-zq)(1-z^{-1}q)}\left(
\frac{f_1(z^{-1}q^{k+1})}{(q^2z)_\infty(z^{-1}q^2)_{k-1}}-1\right)\\
&+\frac{(1+z)z^{2k-1}}{(1-z^{-1}q)(1-zq)}\left(
\frac{f_2(z^{-1}q^{k+1})}
{(z^{-1}q^2)_{k-1}(zq^2)_{k-1}(zq^{k+2})_\infty}-1\right)
\\
&+\frac{1+z}{(z^{-1}q)_k}
\left(\sum_{\substack{r> k\\ r\equiv k\pmod 2}}
\frac{
z^{2r-1}f_2(z^{-1}q^{r+1})}
{(zq)_{r}(zq^{r+2})_\infty}
+
\sum_{\substack{r\ge k\\ r\equiv k\pmod 2}}
\frac{
z^{2r-2}q^{r+2}g_2(z^{-1}q^{r+1})
}
{(zq)_{r-1}(zq^{r+1})_\infty}\right)
\end{align*}
has nonnegative coefficients. Here we have used the fact that $f_1$, $f_2$ and $g_2$ have
nonnegative coefficients after expanding the reciprocal $q$-shifted
factorials, and the constant terms of $f_1$ and $f_2$ are equal to $1$.  Thus,
\begin{align*}
\sum_{n\ge 0}\sum_{m\in\bZ}J_{k,n}(m)z^m q^{n}
&=
\frac{(-1)^kq^{-\binom{k+1}{2}}}{(z)_{\infty}(q/z)_\infty}
\sum_{j\ge k}(-1)^jq^{\binom{j+1}{2}}z^{-j}(1-z^{2j+1})\\
&=
\frac{z^{-k}(1+z+\cdots+z^{2k})}
{(1-qz)(1-z^{-1}q)}+z^{-k}{\sf N}_k(z,q).
\end{align*}
Note that
\begin{align*}
\frac{z^{-k}(1+z+\cdots+z^{2k})}
{(1-qz)(1-z^{-1}q)}&=\sum_{n\ge 0}q^nz^{-k}\sum_{\substack{h+l=n\\ h,l\ge 0}}\sum_{0\le r\le 2k}z^{h-l+r}\\
&=\sum_{n\ge 0}q^n z^{-n-k}\sum_{0\le h\le n} \sum_{0\le r\le 2k} q^{2h+r}\\
&=\sum_{n\ge 0}q^n z^{-n-k} \sum_{0\le t\le 2(n+k)} z^t\sum_{\substack{0\le h\le n\\ 0\le t-2h=r\le 2k}}1,
\end{align*}
and for all $0\le t\le 2(n+k)$,
$$\sum_{\substack{0\le h\le n\\ 0\le t-2h=r\le 2k}}1\ge 1,$$
we immediately obtain $J_{k,n}(m)\ge 1$ for all $-k-n\le m\le k+n$, which completes the proof of the corollary.
\end{proof}


\begin{thebibliography}{1}

\bibitem{MR2946378}
G.~E. Andrews and M. Merca.
\newblock The truncated pentagonal number theorem.
\newblock {\em J. Combin. Theory Ser. A}, 119(8):1639--1643, 2012.

\bibitem{MR2128719}
G. Gasper and M. Rahman.
\newblock {\em Basic hypergeometric series}, volume~96 of {\em Encyclopedia of
  Mathematics and its Applications}.
\newblock Cambridge University Press, Cambridge, second edition, 2004.
\newblock With a foreword by Richard Askey.


\bibitem{arXiv:2606.27507}
C.~Ding and L.~Sun,
\emph{A combinatorial proof for the positivity of the normalized Jacobi triple product tails},
preprint, arXiv:2606.27507.

\bibitem{MR3007145}
V. J.~W. Guo and J. Zeng.
\newblock Two truncated identities of {G}auss.
\newblock {\em J. Combin. Theory Ser. A}, 120(3):700--707, 2013.

\bibitem{MR3398853}
T.~Y. He, K.~Q. Ji, and W. J.~T. Zang.
\newblock Bilateral truncated {J}acobi's identity.
\newblock {\em European J. Combin.}, 51:255--267, 2016.

\bibitem{MR3280682}
R. Mao.
\newblock Proofs of two conjectures on truncated series.
\newblock {\em J. Combin. Theory Ser. A}, 130:15--25, 2015.

\bibitem{MR4309312}
M. Merca.
\newblock Truncated theta series and {R}ogers-{R}amanujan functions.
\newblock {\em Exp. Math.}, 30(3):364--371, 2021.

\bibitem{SZ20231}
M.~J. Schlosser and N.~H. Zhou.
\newblock Expansions of averaged truncations of basic hypergeometric series.
\newblock {\em Proc. Amer. Math. Soc.}, 152(11):4659--4673, 2024.

\bibitem{MR3937794}
C. Wang and A.~J. Yee.
\newblock Truncated {J}acobi triple product series.
\newblock {\em J. Combin. Theory Ser. A}, 166:382--392, 2019.

\bibitem{MR3280681}
A.~J. Yee.
\newblock A truncated {J}acobi triple product theorem.
\newblock {\em J. Combin. Theory Ser. A}, 130:1--14, 2015.




\bibitem{MR4768277}
N.~H. Zhou.
\newblock Positivity and tails of pentagonal number series.
\newblock {\em J. Combin. Theory Ser. A}, 208:Paper No. 105933, 21, 2024.


\bibitem{arXiv:2606.13274}
N.~H. Zhou,
\emph{Monotonicity of rank functions for concave compositions},
preprint, arXiv:2606.13274.

\end{thebibliography}

\end{document}